\newcommand{\de}{\partial}
\newcommand{\ddbar}{\sqrt{-1} \partial \overline{\partial}}
\newcommand{\ov}[1]{\overline{#1}}
\newcommand{\mn}{\sqrt{-1}}
\newcommand{\ti}[1]{\tilde{#1}}
\newcommand{\vp}{\varphi}
\newcommand{\ve}{\varepsilon}
\renewcommand{\leq}{\leqslant}
\renewcommand{\geq}{\geqslant}
\renewcommand{\le}{\leqslant}
\renewcommand{\ge}{\geqslant}
\newcommand{\be}{\begin{equation}}
\newcommand{\ee}{\end{equation}}
\begin{document}
\newtheorem{claim}{Claim}
\newtheorem{theorem}{Theorem}[section]
\newtheorem{lemma}[theorem]{Lemma}
\newtheorem{corollary}[theorem]{Corollary}
\newtheorem{proposition}[theorem]{Proposition}
\newtheorem{question}{question}[section]
\theoremstyle{definition}
\newtheorem{remark}[theorem]{Remark}

\numberwithin{equation}{section}

\title[$C^{1,1}$ regularity of geodesics]{On the $C^{1,1}$ regularity of geodesics in the space of K\"ahler metrics}
\author[J. Chu]{Jianchun Chu}
\address{School of Mathematical Sciences, Peking University, Yiheyuan Road 5, Beijing, P.R.China, 100871}
\author[V. Tosatti]{Valentino Tosatti}
\address{Department of Mathematics, Northwestern University, 2033 Sheridan Road, Evanston, IL 60208}
\author[B. Weinkove]{Ben Weinkove}
\address{Department of Mathematics, Northwestern University, 2033 Sheridan Road, Evanston, IL 60208}

\begin{abstract}
We prove that any two K\"ahler potentials on a compact K\"ahler manifold can be connected by a geodesic segment of $C^{1,1}$ regularity. This follows from an {\em a priori} interior real Hessian bound for solutions of the nondegenerate complex Monge-Amp\`ere equation, which is independent of a positive lower bound for the right hand side.
\end{abstract}

\maketitle

\section{Introduction} \label{sectionintro}

Let $(X^m, g)$ be a compact $m$-dimensional K\"ahler manifold without boundary.  Write $\omega = \sqrt{-1} g_{i\ov{j}}dz^i \wedge d\ov{z}^j$ for the K\"ahler form of $g$.  Write $\mathcal{H}_{\omega}$ for the space of K\"ahler metrics cohomologous to $\omega$, which we identify with   smooth functions $\varphi$ with $\omega + \ddbar \varphi>0$, modulo constants.  The space $\mathcal{H}_{\omega}$ can be endowed with the structure of an infinite dimensional Riemannian manifold \cite{Ma, Se, Do}. Chen \cite{Ch} showed that any two potentials $\varphi_0, \varphi_1 \in \mathcal{H}_{\omega}$ can be joined by a weak geodesic segment $\{ \varphi_t \}_{t\in [0,1]}$.  The purpose of this note is to explain how the arguments of our earlier paper \cite{CTW} on the Monge-Amp\`ere equation imply a $C^{1,1}$ regularity result for these geodesics.

We first recall the equivalence of the geodesic equation with the homogeneous complex Monge-Amp\`ere equation \cite{Se, Do}.  Given a smooth family of K\"ahler potentials $\{ \varphi_t \}_{t\in [0,1]}$, we let $\Sigma=\{z\in\mathbb{C}\ |\ 1\leq |z|\leq e\}$ and we define a function $\Phi$ on $X \times \Sigma$ by $\Phi(x,z)=\vp_{\log|z|}(x)$.
Then, as noted in \cite{Se,Do}, the path $\{\vp_t\}$ is a geodesic connecting $\vp_0$ and $\vp_1$ if and only if $\Phi$
solves the homogeneous complex Monge-Amp\`ere equation
\begin{equation}\label{ma3}
(\pi^*\omega+\ddbar\Phi)^{m+1}=0,
\end{equation}
with boundary data $\Phi(x,1)=\vp_0(x), \Phi(x,e)=\vp_1(x)$, $x\in X$. From \eqref{ma3} it also follows easily that $\pi^*\omega+\ddbar\Phi\geq 0$ on $X\times\Sigma$, where $\pi:X\times\Sigma\to X$ is the projection. Generalizing this fact, we say that a bounded function $\Phi$ on $X\times\Sigma$ is a weak geodesic connecting $\vp_0$ and $\vp_1$
if $\pi^*\omega+\ddbar\Phi\geq 0$ weakly on $X\times\Sigma$ and $\Phi$ solves \eqref{ma3} in the sense of Bedford-Taylor \cite{BT} with the same boundary data as above. Chen \cite{Ch} proved that there is a unique such weak geodesic $\Phi$, and that the quantities $\sup_{X\times\Sigma}|\Phi|,$ $\sup_{X\times\Sigma}|\nabla\Phi|,$ $\sup_{X\times\Sigma}\Delta \Phi$ and $\sup_{\de(X\times\Sigma)}|\nabla^2\Phi|$ are all bounded (see also \cite{B2,CKNS,Gu,Ya}), so in particular $\Phi$ is in $C^{1,\alpha}(X\times\Sigma)$ for all $0<\alpha<1$. Here and in the following, when we say that a function belongs to a certain function space on a manifold with boundary, we mean that it has the stated regularity up to (and including) the boundary.

It was expected that the real Hessian of $\Phi$ is bounded in the interior (namely that $\sup_{X \times \Sigma} |\nabla^2 \Phi| \le C)$.  This would imply that geodesics are $C^{1,1}$.  Some progress towards this was made recently:  B\l ocki \cite{B2} proved that $\Phi$ is  in $C^{1,1}(X\times\Sigma)$ provided that $(X,g)$ has nonnegative bisectional curvature, while Berman \cite{Be} proved that the restrictions to $X$ given by $\vp_t=\Phi(\cdot,e^t), 0\leq t\leq 1,$ are all in $C^{1,1}(X)$ provided that $[\omega]=c_1(L)$ for some holomorphic line bundle $L$ over $X$. However, the problem of proving the full $C^{1,1}$ regularity of weak geodesics remained open in general.  Our main result resolves this completely.

\begin{theorem}\label{geodesic}
Given any compact K\"ahler manifold $(X,g)$ and any two K\"ahler potentials on it, the weak geodesic $\Phi$ connecting them belongs to $C^{1,1}(X\times\Sigma)$.
\end{theorem}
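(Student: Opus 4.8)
The plan is to deduce Theorem~\ref{geodesic} from a uniform a~priori $C^{1,1}$ estimate along an approximating family of \emph{nondegenerate} complex Monge--Amp\`ere equations, followed by a compactness-and-uniqueness argument. Fix a K\"ahler form $\hat\beta$ on $\Sigma$, pulled back to $X\times\Sigma$, and set $\hat\omega=\pi^*\omega+\hat\beta$, which is K\"ahler on $X\times\Sigma$; for $\ve\in(0,1]$ put $\omega_\ve=\pi^*\omega+\ve\hat\beta$, a K\"ahler form with $\omega_\ve\le\hat\omega$ and $\omega_\ve\to\pi^*\omega$ smoothly as $\ve\to0$. I would solve the Dirichlet problems
\[
(\omega_\ve+\ddbar\Phi_\ve)^{m+1}=\ve\,\omega_\ve^{m+1}\quad\text{on }X\times\Sigma,
\]
with $\Phi_\ve=\vp_0$ on $X\times\{|z|=1\}$ and $\Phi_\ve=\vp_1$ on $X\times\{|z|=e\}$. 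These admit smooth $\omega_\ve$-plurisubharmonic solutions by the work of Caffarelli--Kohn--Nirenberg--Spruck \cite{CKNS} and Guan \cite{Gu}, the boundary data extending to an admissible subsolution as in Chen \cite{Ch}; they are essentially the approximants used in \cite{Ch,B2}. As recalled in the Introduction, $\|\Phi_\ve\|_{C^0}+\|\nabla\Phi_\ve\|_{C^0}+\sup_{\de(X\times\Sigma)}|\nabla^2\Phi_\ve|\le C$ with $C$ independent of $\ve$ \cite{Ch,B2,CKNS,Gu}.

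The crux is an interior bound $\sup_{X\times\Sigma}|\nabla^2\Phi_\ve|_{\hat\omega}\le C$ with $C$ independent of $\ve$. Writing the equation as $(\omega_\ve+\ddbar\Phi_\ve)^{m+1}=e^{F_\ve}\omega_\ve^{m+1}$ we have $F_\ve\equiv\log\ve$, so all derivatives of $F_\ve$ vanish and $\sup F_\ve\le0$, while $\inf F_\ve=\log\ve\to-\infty$. I would apply the interior real Hessian estimate for the nondegenerate complex Monge--Amp\`ere equation from our earlier paper \cite{CTW}: on $X\times\Sigma$ with background $\omega_\ve$ and reference metric $\hat\omega$ it bounds $\sup_{X\times\Sigma}|\nabla^2\Phi_\ve|_{\hat\omega}$ in terms only of $(X,g)$, $\hat\beta$, the boundary data, the uniform $C^1$- and boundary-$C^2$-bounds above, and the derivatives of $F_\ve$, but \emph{not} on $\inf F_\ve$; since $F_\ve$ is constant this is uniform in $\ve$. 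The two points that require care here are that the estimate of \cite{CTW} be uniform over the degenerating family $\{\omega_\ve\}$---it should use only $\omega_\ve\le\hat\omega$ together with $\hat\omega$-bounds on $\nabla\omega_\ve$ and on the curvature of $\omega_\ve$, all uniform in $\ve$, and crucially \emph{no} lower bound on $\omega_\ve$---and that its structural hypotheses, namely the existence of an admissible subsolution with the prescribed boundary values, are satisfied in this Dirichlet setting.

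Finally I would let $\ve\to0$. The uniform $C^{1,1}$ bound yields a subsequence $\Phi_\ve\to\Phi_\infty$ in $C^{1,\alpha}(X\times\Sigma)$ for every $\alpha<1$ and weakly-$*$ in $W^{2,\infty}(X\times\Sigma)$, so $\Phi_\infty\in C^{1,1}(X\times\Sigma)$; moreover $\pi^*\omega+\ddbar\Phi_\infty\ge0$ weakly and $\Phi_\infty$ has boundary values $\vp_0,\vp_1$. Since $\Phi_\ve\to\Phi_\infty$ uniformly with uniformly bounded Laplacians and $\omega_\ve\to\pi^*\omega$ smoothly, the Bedford--Taylor continuity of the Monge--Amp\`ere operator \cite{BT} gives $(\pi^*\omega+\ddbar\Phi_\infty)^{m+1}=\lim_{\ve\to0}\ve\,\omega_\ve^{m+1}=0$, so $\Phi_\infty$ is a weak geodesic with the same endpoints; by Chen's uniqueness \cite{Ch}, $\Phi_\infty=\Phi$, and therefore $\Phi\in C^{1,1}(X\times\Sigma)$. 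The entire difficulty is concentrated in the estimate borrowed from \cite{CTW}: a real Hessian bound that remains finite as $\inf F_\ve\to-\infty$, equivalently as the smallest eigenvalue of $\omega_\ve+\ddbar\Phi_\ve$ collapses---precisely the phenomenon that the classical second-order theory cannot control and the reason the full $C^{1,1}$ regularity of weak geodesics had remained open.
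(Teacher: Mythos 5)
Your strategy is the right one and is essentially the paper's: approximate the weak geodesic by solutions of nondegenerate complex Monge--Amp\`ere equations, invoke a uniform interior real Hessian bound that is independent of $\inf F$, and pass to the limit using Chen's uniqueness. The one place where your setup diverges from the paper's---and where a genuine gap remains---is the choice of background metric in the approximating equation. You solve
$(\omega_\ve+\ddbar\Phi_\ve)^{m+1}=\ve\,\omega_\ve^{m+1}$
with $\omega_\ve=\pi^*\omega+\ve\hat\beta$ degenerating to the merely semipositive $\pi^*\omega$, and then you want to apply the interior Hessian estimate with $g=g_\ve$. But Theorem~\ref{secondord} (and the estimate from \cite{CTW}) is stated for a \emph{fixed} background K\"ahler metric $g$, and its constant $C$ depends on $(M,g)$ in ways that are not manifestly uniform as $g_\ve$ collapses: for instance the proof needs a uniform bound on $\sup\Delta_{g_\ve}\Phi_\ve$ \emph{with respect to $g_\ve$}, which is strictly stronger than the uniform $\hat\omega$-Laplacian bound supplied by Chen (since $g_\ve^{i\bar j}\to\infty$ in the $\Sigma$-directions); the normal-coordinate computations, the inequality $\tilde g^{i\bar i}\ge c$, and the curvature terms all inherit this issue. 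You flag this as a point ``that requires care'' and assert that the estimate ``should'' only use $\omega_\ve\le\hat\omega$ and upper bounds on $\nabla\omega_\ve$ and its curvature, but this is not what Theorem~\ref{secondord} says and it is not verified. As written, this is the gap.

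The paper sidesteps all of this by keeping the background metric \emph{fixed}: one takes $\hat\omega=\pi^*\omega+\mn\,dz\wedge d\bar z$ on $X\times\Sigma$ and considers the standard $\ve$-geodesic equation
\begin{equation*}
(\hat\omega+\ddbar\Phi_\ve)^{m+1}=\ve\,\pi^*\omega^m\wedge\mn\,dz\wedge d\bar z=e^{F_\ve}\hat\omega^{m+1},
\end{equation*}
with $F_\ve=\log\ve+\log\!\bigl(\pi^*\omega^m\wedge\mn\,dz\wedge d\bar z/\hat\omega^{m+1}\bigr)$. Here $\|\nabla F_\ve\|$, $\nabla^2 F_\ve$, and all of $(M,g)$, $\|\Phi_\ve\|_{C^1}$, $\sup\Delta\Phi_\ve$, $\sup_{\partial}|\nabla^2\Phi_\ve|$ are manifestly uniform in $\ve$ (from Chen's estimates for precisely this equation), and the only quantity that degenerates is $\inf F_\ve\to-\infty$---exactly the dependence that Theorem~\ref{secondord} removes. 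So Theorem~\ref{secondord} applies verbatim and yields $\sup_{X\times\Sigma}|\nabla^2\Phi_\ve|\le C$ uniformly. You could salvage your version by a change of potential that absorbs $\ve\hat\beta$ into $\Phi_\ve$ (writing $\omega_\ve=\hat\omega-(1-\ve)\hat\beta$ and shifting $\Phi_\ve$ by $(1-\ve)$ times a K\"ahler potential of $\hat\beta$), after which your equation is again of the form $(\hat\omega+\ddbar\Psi_\ve)^{m+1}=e^{F_\ve}\hat\omega^{m+1}$ with fixed background; but as stated your argument quietly asks for a uniform version of the estimate over a degenerating family of background metrics, which is not what the paper proves and is not needed.
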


This result should be contrasted with the negative results of Lempert-Vivas \cite{LV}, Darvas-Lempert \cite{DL} and Darvas \cite{Da}, which show that in general the weak geodesic satisfies that $\ddbar\Phi\not\in C^0(X\times\Sigma)$. See also \cite{BB, CLP, CT, Do2, He, PSS,PS,PS4,RN,RN2} and references therein for other recent developments on issues related to weak geodesics and their regularity. Our result can also be used to simplify part of the arguments in \cite{BB}, and possibly to extend the results in \cite{RN2} to more general manifolds.

 Theorem \ref{geodesic} is a consequence of the following  general interior estimate for the complex Monge-Amp\`ere equation.  Let  $(M^n,g)$  now be a compact $n$-dimensional K\"ahler manifold with possibly nonempty boundary $\de M$.  Again, write $\omega$ for the K\"ahler form of $g$.  Suppose that $\vp\in C^\infty(M,\mathbb{R})$ satisfies $\omega+\ddbar\vp>0$ and solves the complex Monge-Amp\`ere equation
\begin{equation}\label{ma2}
(\omega+\ddbar\vp)^n=e^F\omega^n,
\end{equation}
for some smooth function $F$.

We have the following interior {\em a priori} $C^{1,1}$ estimate, which is proved in Section \ref{sectionproof}:
\begin{theorem}\label{secondord}
Let $\varphi$ solve the complex Monge-Amp\`ere equation (\ref{ma2}).  Then there exists a constant $C$, depending only on $(M,g)$, on upper bounds on $\|\vp\|_{C^1(M,g)}$, $\sup_M \Delta_g\vp$,  $\sup_{\de M}|\nabla^2\vp|_g$ (if $\de M\neq\emptyset$), $\sup_M |\nabla F|_g,$ and on a lower bound on $\nabla^2 F$ (with respect to $g$) such that
$$\sup_M| \nabla^2 \varphi |_g \le C.$$
\end{theorem}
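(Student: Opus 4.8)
The plan is to bound the largest eigenvalue of the real Hessian $\nabla^2\varphi$ by a maximum principle argument applied to a carefully chosen test function, following the strategy of \cite{CTW}. Since $\omega+\ddbar\varphi>0$ and $\sup_M\Delta_g\varphi$ is bounded, the complex Hessian $\ddbar\varphi$ is already controlled in $C^0$; the real content of the theorem is to control the remaining components of $\nabla^2\varphi$ — in local holomorphic coordinates the $(2,0)$ and $(0,2)$ parts $\varphi_{ij}$ — which are invisible to $\Delta_g\varphi$. Write $\lambda_1(x)$ for the largest eigenvalue of $\nabla^2\varphi$ at $x$ measured with respect to $g$, set $\tilde g:=g+\sqrt{-1}\partial\ov\partial\varphi$, and consider
$$Q=\log\lambda_1+\phi(|\nabla\varphi|^2_g)+\psi(\varphi),$$
for one-variable functions $\phi$ (increasing, mildly concave) and $\psi$ (decreasing, strictly concave), to be fixed at the end. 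If $\de M\neq\emptyset$ and $Q$ attains its maximum on $\de M$ we are done by the assumed boundary bound, so we may assume the maximum is at an interior point $x_0$; there we choose a local frame diagonalizing both $g$ and $\tilde g$ at $x_0$, let $e_1$ be a unit eigenvector for $\lambda_1$, and — after a standard perturbation of $\nabla^2\varphi$ to make its top eigenvalue simple near $x_0$ — treat $\log\lambda_1$ as a smooth function with $\lambda_1\approx(\nabla^2\varphi)(e_1,e_1)$ near $x_0$.

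The core is to apply the linearized operator $L=\tilde g^{i\bar j}\de_i\de_{\bar j}$ to $Q$ at $x_0$ and combine $LQ\le 0$ with $\nabla Q=0$. For the leading term, differentiating the equation $\log\det\tilde g=F+\log\det g$ twice in the direction $e_1$ gives
$$\tilde g^{i\bar j}\,\de_{e_1}\de_{e_1}\tilde g_{i\bar j}-\tilde g^{i\bar q}\tilde g^{p\bar j}(\de_{e_1}\tilde g_{i\bar j})(\de_{e_1}\tilde g_{p\bar q})=\de_{e_1}\de_{e_1}F,$$
where the first term equals $L(\varphi_{e_1e_1})$ up to curvature errors controlled by $\sup_M\Delta_g\varphi$ and $\|\varphi\|_{C^1}$, the negative quadratic term is the crucial concavity term, and $\de_{e_1}\de_{e_1}F$ is bounded below by the assumed lower bound on $\nabla^2F$ and $\sup_M|\nabla F|$ — this is precisely the step that uses no positive lower bound on $e^F$. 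Dividing by $\lambda_1$ and incorporating the second-order terms from $\log\lambda_1$ produces, via $\nabla Q=0$, nonnegative quantities of the form $\sum_i\tilde g^{i\bar i}|\de_i\lambda_1|^2/\lambda_1^2$ to play against the third-order terms. For $L(\psi(\varphi))$ one uses $L\varphi=n-\mathrm{tr}_{\tilde g}g$, so that $\psi'(\varphi)\big(n-\mathrm{tr}_{\tilde g}g\big)$ contributes a large positive multiple of $\mathrm{tr}_{\tilde g}g=\sum_i\tilde g^{i\bar i}$, which dominates the bad terms that blow up where $\tilde g$ degenerates, while $\psi''(\varphi)<0$ together with $\nabla Q=0$ disposes of cross terms. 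Finally $L(\phi(|\nabla\varphi|^2_g))$ generates, via $\phi'>0$, positive contributions of the shape $\phi'\sum_i\tilde g^{i\bar i}|\varphi_{e_1i}|^2$ — exactly what is needed to absorb the third-order terms involving the $(2,0)$-part of the Hessian, the derivatives of $\varphi_{ij}$ that the complex-Hessian estimate cannot see — with $\phi''<0$ handling the leftover gradient terms.

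Assembling these, choosing $\phi,\psi$ with suitably large coefficients, and absorbing all third-order terms should yield an inequality of the form $\mathrm{tr}_{\tilde g}g\le C\big(1+\lambda_1^{-1}(\text{lower order})\big)$ at $x_0$, which forces $\lambda_1(x_0)\le C$ and hence, since $x_0$ is the maximum of $Q$, the desired bound $\sup_M|\nabla^2\varphi|_g\le C$ with $C$ depending only on the listed quantities. The main obstacle, as in \cite{CTW}, is the bookkeeping of the third-order terms: a priori $L(\log\lambda_1)$ carries pieces of uncontrolled sign, and one must verify that the concavity term from the twice-differentiated equation, the identities coming from $\nabla Q=0$, and the positive contributions from $\phi$ and $\psi$ together cancel or absorb every third-order quantity. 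One also needs care to keep all constants independent of $\inf_M F$ (only $\nabla F$ and the lower bound on $\nabla^2F$ may enter), and to handle the case of a high-multiplicity top eigenvalue — dealt with either by the perturbation above or, equivalently, by working with $(\nabla^2\varphi)(e_1,e_1)$ for a fixed unit vector field $e_1$ extending the eigenvector and noting that the one-sided differential inequalities at the maximum suffice.
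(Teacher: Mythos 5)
Your overall setup matches the paper: the test function $Q=\log\lambda_1(\nabla^2\varphi)+h(|\partial\varphi|^2_g)-A\varphi$, the linearized operator $\Delta_{\tilde g}=\tilde g^{i\bar j}\partial_i\partial_{\bar j}$, the observation that differentiating $\log\det\tilde g=\log\det g+F$ twice only brings in $\nabla F$ and $\nabla^2 F$, and the role of $\Delta_{\tilde g}\varphi=n-\mathrm{tr}_{\tilde g}g$. However, there are two substantive errors in your sketch precisely where the real difficulty lies.

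First, you treat the perturbation of $\nabla^2\varphi$ to split the top eigenvalue and the alternative of extending a fixed unit vector field $e_1$ and using $(\nabla^2\varphi)(e_1,e_1)$ as ``equivalently'' interchangeable. They are not, and the paper is explicit about this in the introduction: the fixed-vector-field route is B\l ocki's, and in that argument the logarithm introduces a bad third-order term $-\tilde g^{i\bar i}|\partial_i\lambda_1|^2/\lambda_1^2$ that cannot be absorbed unless $g$ has nonnegative bisectional curvature. The whole point of the perturbation via the endomorphism $\Phi=g^{-1}\nabla^2\varphi-g^{-1}B$ is that the \emph{second} derivative of the eigenvalue, $\lambda_1^{\alpha\beta,\gamma\delta}$, produces new \emph{good} third-order terms $2\sum_{\alpha>1}\tilde g^{i\bar i}|\partial_i(\varphi_{V_\alpha V_1})|^2/(\lambda_1(\lambda_1-\lambda_\alpha))$ that are absent in the fixed-vector-field approach. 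Your sketch never exhibits the mechanism by which the bad term is absorbed: in the paper this is Lemma \ref{lemmauno}, which decomposes $\partial_i(\varphi_{V_1V_1})$ through $W_1=\tfrac{1}{\sqrt 2}(V_1-\sqrt{-1}JV_1)$ and $JV_1=\sum_{\alpha>1}\mu_\alpha V_\alpha$, couples the $\ov{W_1}$ piece to the ``concavity'' term $\tilde g^{p\bar p}\tilde g^{q\bar q}|V_1(\tilde g_{p\bar q})|^2/\lambda_1$, couples the $JV_1$ piece to the new eigenvalue terms via Cauchy--Schwarz with the crucial bound $\lambda_1-\sum_{\alpha>1}\lambda_\alpha\mu_\alpha^2\le 2\lambda_1+2$ coming from $\tilde g(W_1,\ov{W_1})>0$, and splits off a $2\varepsilon$ fraction to be rewritten via $d\hat Q=0$. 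Saying ``one must verify that $\dots$ together cancel or absorb every third-order quantity'' names the problem but does not solve it; there is no reason offered that the fixed-vector-field version could close.

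Second, your auxiliary function $\phi$ is taken ``mildly concave'' with $\phi''<0$; in the paper $h$ is strictly \emph{convex}, with $h''=2(h')^2>0$, and this sign is essential: the $h''\tilde g^{i\bar i}|\partial_i|\partial\varphi|^2_g|^2$ term is exactly the positive contribution needed to cancel the $2(h')^2\tilde g^{i\bar i}|\partial_i|\partial\varphi|^2_g|^2$ term that arises after rewriting the $2\varepsilon$-portion of the bad term using $d\hat Q=0$. Relatedly, the $h'\sum_k\tilde g^{i\bar i}(|\varphi_{ik}|^2+|\varphi_{i\bar k}|^2)$ term is not what ``absorbs the third-order terms involving the $(2,0)$-part''; it plays no role in the third-order cancellations and is instead used at the very end, together with $\sum_i\tilde g^{i\bar i}$, to conclude $\lambda_1(x_0)\le C$ once all bad terms have already been neutralized.
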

Crucially, our estimate is independent of $\inf_M F$, and so it can be applied to the homogeneous complex Monge-Amp\`ere equation to obtain regularity for geodesics in the space of K\"ahler metrics. This is explained in Section \ref{sectionthm1}.

More generally, combining Theorem \ref{secondord} with \cite[Theorem 1.4]{B2} and \cite[Theorem B]{Bo} we immediately obtain the following improvement of these results:
\begin{corollary}\label{geodesic2}
Let $(M,g)$ be a compact K\"ahler manifold with nonempty boundary, which we assume is weakly pseudoconcave. Given a smooth function $\vp_0$ on $M$ with $\omega+\ddbar\vp_0>0$, there exists a unique solution $\vp\in C^{1,1}(M)$ of
$$(\omega+\ddbar\vp)^n=0, \textrm{ on }M,\quad \vp=\vp_0,\textrm{ on }\de M.$$
\end{corollary}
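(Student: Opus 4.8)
To prove Corollary~\ref{geodesic2}, the plan is to obtain $\vp$ as a limit of solutions of nondegenerate complex Monge-Amp\`ere equations and to invoke Theorem~\ref{secondord} with a right-hand side that is allowed to tend to zero. Concretely, for each $\tau\in(0,1]$ I would consider the Dirichlet problem $(\omega+\ddbar\vp_\tau)^n=\tau\,\omega^n$ on $M$ with $\vp_\tau=\vp_0$ on $\de M$. Since $\de M$ is weakly pseudoconcave, \cite[Theorem 1.4]{B2} and \cite[Theorem B]{Bo} supply, for every such $\tau$, a unique $\vp_\tau\in C^\infty(M)$ with $\omega+\ddbar\vp_\tau>0$ solving this, together with bounds on $\|\vp_\tau\|_{C^1(M,g)}$, on $\sup_M\Delta_g\vp_\tau$ and on $\sup_{\de M}|\nabla^2\vp_\tau|_g$ that are uniform in $\tau\in(0,1]$; moreover, by the comparison principle the $\vp_\tau$ increase as $\tau\downarrow 0$ and converge uniformly on $M$ to the unique bounded function $\vp$ with $\omega+\ddbar\vp\ge 0$, $\vp=\vp_0$ on $\de M$ and $(\omega+\ddbar\vp)^n=0$ in the sense of Bedford--Taylor -- this $\vp$, whose uniqueness is part of the statements just cited, is the solution whose $C^{1,1}$ regularity is asserted.

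Next I would feed the family $\{\vp_\tau\}$ into Theorem~\ref{secondord} with $F=F_\tau:=\log\tau$. Since $F_\tau$ is constant we have $\nabla F_\tau\equiv 0$ and $\nabla^2 F_\tau\equiv 0$, so the hypotheses of Theorem~\ref{secondord} on $\sup_M|\nabla F|_g$ and on a lower bound for $\nabla^2 F$ hold with constants independent of $\tau$; together with the uniform $C^1$, Laplacian and boundary-Hessian bounds above, Theorem~\ref{secondord} then yields a constant $C$, \emph{independent of $\tau$} -- and in particular not seeing $\inf_M F_\tau=\log\tau\to-\infty$ -- with $\sup_M|\nabla^2\vp_\tau|_g\le C$. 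The uniform bound $\|\vp_\tau\|_{C^{1,1}(M)}\le C$ then lets me extract a subsequence converging in $C^{1,\alpha}(M)$ for every $\alpha\in(0,1)$; by the first paragraph its limit is $\vp$, which therefore belongs to $C^{1,1}(M)$, satisfies $\omega+\ddbar\vp\ge 0$ and $\vp=\vp_0$ on $\de M$, and solves $(\omega+\ddbar\vp)^n=0$ (this equation also follows directly: the measures $(\omega+\ddbar\vp_\tau)^n=\tau\,\omega^n$ converge weakly to $0$ and, by the Bedford--Taylor continuity theorem along the uniformly convergent $\vp_\tau$, also to $(\omega+\ddbar\vp)^n$).

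The main obstacle I anticipate lies in the first step: one must know that the classical lower-order a priori estimates for the approximating equations $(\omega+\ddbar\vp_\tau)^n=\tau\,\omega^n$ -- the $C^0$, gradient and Laplacian bounds, and, most delicately, the tangential and double-normal second-derivative bounds on $\de M$ -- are uniform in $\tau$ as $\tau\downarrow 0$. The double-normal boundary estimate is the subtle point, since that is where a lower bound on the right-hand side would ordinarily enter; it is exactly the weak pseudoconcavity of $\de M$ that makes this estimate available uniformly in $\tau$, and this is the content of \cite[Theorem 1.4]{B2} and \cite[Theorem B]{Bo} that the argument relies on. Granting these uniform lower-order bounds, the upgrade to a uniform interior $C^{1,1}$ bound through Theorem~\ref{secondord} and the subsequent limiting argument are routine, so no new ideas beyond Theorem~\ref{secondord} are needed.
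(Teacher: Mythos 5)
Your proof is correct and fills in exactly the argument the paper leaves implicit: the paper dismisses Corollary~\ref{geodesic2} as an ``immediate'' combination of Theorem~\ref{secondord} with the cited results of B\l ocki and Boucksom, and the $\tau\omega^n$-approximation you describe (with $F_\tau=\log\tau$, so $\nabla F_\tau=\nabla^2F_\tau=0$ and the estimate's independence of $\inf_M F$ is what matters) is precisely the same scheme the paper carries out explicitly in Section~\ref{sectionthm1} for the geodesic case. You have also correctly located the delicate point — the uniformity in $\tau$ of the boundary double-normal Hessian bound, supplied by weak pseudoconcavity via the cited theorems.
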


As alluded to above, the proof of Theorem \ref{secondord} is essentially contained in our earlier paper \cite{CTW}  where we solved the Monge-Amp\`ere equation on compact almost-complex manifolds (see also \cite{Pl}). Indeed, the reader may easily verify, combining the arguments in \cite{CTW} with the small modifications in this paper, we also obtain that the second order estimate in the general almost-complex setting of \cite[Proposition 5.1]{CTW}  does not depend on $\inf_M F$. This includes  the Hermitian case, when $J$ is integrable but $d\omega\neq 0$, which was studied for example in \cite{GL,TW}. The reader who is interested in the most general setup is referred to \cite{CTW}.

The arguments of \cite{CTW} are long and intricate because of technicalities that arise when the complex structure is not integrable, and/or when $d\omega\neq 0$, and because there we  do not  \emph{a priori} have a bound on the \emph{complex} Hessian of $\varphi$. The aim of this note is to give a self-contained proof of Theorem \ref{secondord} which is substantially simpler and shorter than the arguments of \cite{CTW}.

We discuss briefly the idea of the proof.  If one follows the approach of B\l ocki \cite{B2}, a difficulty arises when applying the maximum principle to a quantity involving the largest eigenvalue of the real Hessian, given by
\begin{equation}\label{max}
\sup_{|V|_g=1}\nabla^2\vp(V,V).
\end{equation}
A perturbation argument is needed to make this smooth near a maximum point $x_0$. In \cite{B2} this is done by taking a unit vector $V_0$ which maximizes \eqref{max} at $x_0$, extending it smoothly to a unit vector $V$ in a neighborhood of $x_0$, and considering the smooth quantity $\nabla^2\vp(V,V)$ near $x_0$. To deal with bad terms involving the trace of $g_{i\ov{j}}$ with respect to $g_{i\ov{j}} + \varphi_{i\ov{j}}$, one is forced to consider the logarithm of this quantity, which introduces bad third order terms that cannot be controlled, unless $g$ has nonnegative bisectional curvature in which case there is no need to take the logarithm \cite{B2}. Instead, we use a different perturbation argument as in \cite{CTW,S,STW}, replacing \eqref{max} near $x_0$ with the largest eigenvalue function of a small perturbation of $\nabla^2\vp$, so that its largest eigenspace is $1$-dimensional and therefore the largest eigenvalue varies smoothly near $x_0$. Crucially, this gives new good third order terms (the terms in the first sum on the right hand side of \eqref{LhatQ4}), which are shown, via a series of delicate calculations and estimates, to be just barely enough to control the bad terms.

\begin{remark}
In a follow up paper \cite{CTW2}, we will investigate the regularity of weak geodesic \emph{rays} which arise from test configurations, as in the work of Phong-Sturm \cite{PS, PS2,PS3,PS4} who established $C^{1,\alpha}$ regularity in this setting.
\end{remark}

{\bf Acknowledgments. }  The authors thank J. Song for pointing out a simplification of the proof given in an earlier version of this paper, and M. P\u{a}un for interesting discussions.  We also thank the referee for some helpful comments.  The first-named author would like to thank his advisor G. Tian for encouragement and support.
  The second-named author was partially supported by NSF grant DMS-1610278, and the third-named author by NSF grant DMS-1406164. This work was completed while the second-named author was visiting the Yau Mathematical Sciences Center at Tsinghua University in Beijing, which he would like to thank for the hospitality.

\section{$C^{1,1}$ bound for the complex Monge-Amp\`ere equation}\label{sectionproof}

In this section we give the proof of Theorem \ref{secondord}. We follow the approach that we introduced recently in \cite{CTW},  taking a little more care to make sure that all the estimates are independent of $\inf_M F$. Moreover, we make many simplifications, due to the fact that we assume we are in the K\"ahler case, and also because we allow our estimates to depend on $\sup_M\Delta_g\vp$ (since this quantity is known to be bounded {\em a priori} in the settings of Theorem \ref{geodesic} and Corollary \ref{geodesic2} due to previous works mentioned in the introduction), while one of the main points of \cite{CTW} is that such an estimate on the Laplacian is not available in the almost-complex case, and our real Hessian bound there does not use it (but of course it implies it).

\begin{proof}[Proof of Theorem \ref{secondord}]
Up to modifying $\vp$ by adding a constant to it, we will assume that $\sup_M\vp=0$. We apply the maximum principle to the quantity
$$Q = \log \lambda_1( \nabla^2 \varphi) + h(| \partial \varphi |^2_g)  -A \varphi,$$
where $\lambda_1( \nabla^2 \varphi)$ is the largest eigenvalue of the real Hessian $\nabla^2 \varphi$ (with respect to the metric $g$).
The function $h$ is given by
\begin{equation} \label{defnh}
h(s) = - \frac{1}{2} \log (1+ \sup_M |\partial \varphi|^2_g  - s),
\end{equation}
and $A>1$ is a constant to be determined (which will be uniform, in the sense that it will depend only on the background data and on the quantities in the statement of Theorem \ref{secondord}).  Note that $h(| \partial \varphi|^2_g)$ is uniformly bounded, and
\begin{equation} \label{proph}
\frac{1}{2}\geq h' \geq \frac{1}{2+2\sup_M |\partial \varphi|^2_g}>0, \quad \textrm{and } h'' = 2 (h')^2,
\end{equation}
where we are evaluating $h$ and its derivatives at $|\de\vp|^2_g$.
Observe that since
\begin{equation}\label{stupid}
| \nabla^2 \varphi |_g\leq C\lambda_1( \nabla^2 \varphi)+C,
\end{equation}
a simple consequence of the fact that $\omega+ \ddbar \vp>0$, it suffices to bound $Q$ from above on $M$.
 Note also that $Q$ is a continuous function on its domain $\{\lambda_1(\nabla^2 \varphi)>0\}$, and achieves a maximum at a point $x_0 \in M$ with $\lambda_1(\nabla^2 \varphi(x_0))>0$, which we may assume is not on $\de M$ (otherwise we are done).  On the other hand, $Q$ may not be smooth, since the eigenspace associated to $\lambda_1$ may have dimension strictly larger than $1$.  Because of  this, we use a perturbation argument, as in \cite{CTW,S, STW}.

Fix holomorphic normal coordinates $(z^1,\dots,z^n)$ for $g$ centered at $x_0$, and let $z^j=x^{2j-1}+\mn x^{2j}$, so that $(x^1,\dots,x^{2n})$ are real coordinates near $x_0$. In what follows, we will use Latin letters $i,j,k, \ldots$ for ``complex'' indices ranging from $1$ to $n$, and Greek letters $\alpha, \beta, \ldots$ for ``real'' indices ranging from $1$ to $2n$.
We define
$$\tilde{g}_{i\ov{j}} = g_{i\ov{j}} + \varphi_{i\ov{j}}$$ and
we may assume that at $x_0$ the matrix $(\tilde{g}_{i\ov{j}})$ is diagonal with
$$\tilde{g}_{1\ov{1}} \ge \tilde{g}_{2\ov{2}} \ge \cdots \ge \tilde{g}_{n\ov{n}}.$$

Let $V_1$ be a unit vector (with respect to $g$) corresponding to the largest eigenvalue $\lambda_1$ of $\nabla^2 \varphi$, so that at $x_0$,
$$\nabla^2\varphi(V_1,V_1) = \lambda_1.$$
We extend $V_1$ to an orthonormal basis $V_1, \ldots, V_{2n}$  of eigenvectors of $\nabla^2\vp$ with respect to $g$ at $x_0$, corresponding to eigenvalues $\lambda_1(\nabla^2\vp) \geq \lambda_2(\nabla^2\vp) \ge  \ldots \ge \lambda_{2n}(\nabla^2\vp)$. Denote by $\{ V_{\ \, \beta}^{\alpha} \}_{\alpha=1}^{2n}$ the components of the vector $V_{\beta}$ at $x_0$, with respect to the coordinates $x^1, \ldots, x^{2n}$.  Extend $V_1, V_2, \ldots, V_{2n}$ to be vector fields in a neighborhood of $x_0$  by taking the components to be constant,  noting that the  $V_{\beta}$ may only be eigenvectors for $\nabla^2 \varphi$ at $x_0$.

To avoid the inconvenient situation where $\lambda_1(\nabla^2\varphi)=\lambda_2(\nabla^2\varphi)$,
 define near $x_0$ a smooth symmetric semipositive definite section $B= (B_{\alpha \beta})$ of $T^*M \otimes T^*M$ by
$$B= B_{\alpha \beta}dx^{\alpha} \otimes dz^{\beta} =\sum_{\alpha, \beta}( \delta_{\alpha \beta} - V_{\ \, 1}^{\alpha} V_{\ \, 1}^{\beta}) dx^{\alpha} \otimes dx^{\beta},$$
and  a local endomorphism $\Phi=( \Phi_{\ \, \beta}^{\alpha})$ of $TM$ by
\begin{equation} \label{definePhi}
\Phi_{\ \, \beta}^{\alpha}=  g^{ \alpha \gamma} \nabla_{\gamma \beta}^2 \varphi -  g^{\alpha \gamma} B_{\gamma \beta}.
\end{equation}
We now consider  $\lambda_1(\Phi)$, which is smooth and satisfies $\lambda_1(\Phi) \le \lambda_1(\nabla^2 \varphi)$ in a neighborhood of $x_0$ and
 $\lambda_1(\Phi) = \lambda_1(\nabla^2 \varphi)$ at $x_0$.  The vector fields $V_1,\dots,V_{2n}$ are still eigenvectors for $\Phi$ at $x_0$, with eigenvalues $\lambda_1(\Phi) > \lambda_2(\Phi) \ge  \ldots \ge \lambda_{2n}(\Phi)$.  In what follows we will often write $\lambda_{\alpha}$ for $\lambda_{\alpha}(\Phi)$.

Define a new perturbed smooth quantity $\hat{Q}$  in a neighborhood of $x_0$ by
$$\hat{Q} = \log \lambda_1(\Phi) + h(| \partial \varphi |^2_g) -A \varphi,$$
which still attains a maximum at $x_0$.

Let $\Delta_{\ti{g}}= \tilde{g}^{i\ov{j}} \partial_i \partial_{\ov{j}}$ be the (complex) Laplacian of $\ti{g}$, where we are writing $\partial_i$, $\partial_{\ov{j}}$ for $\frac{\partial}{\partial z^i}$, $\frac{\partial}{\partial \ov{z}^j}$.  Later we may also write $\varphi_i$ for $\partial_i \varphi$ etc. We will compute $\Delta_{\ti{g}} \hat{Q}$ at $x_0$.
In the computation, we may and do assume without loss of generality that $\lambda_1>>1$ at $x_0$.  Note that by assumption $\Delta_g \varphi \le C$, so $\tilde{g}_{i\ov{i}}$ and $\varphi_{i\ov{i}}$ are uniformly bounded from above at $x_0$ and hence also
\begin{equation} \label{ag}
\ti{g}^{i\ov{i}} \ge c, \quad \textrm{for } i=1, 2, \ldots, n,
\end{equation}
for a uniform $c>0$.

\begin{lemma} \label{lemmalbl1}  At $x_0$, we have
\begin{equation} \label{LhatQ4}
\begin{split}
0 \ge \Delta_{\ti{g}} \hat{Q}
\ge {} & 2 \sum_{\alpha >1}  \frac{\tilde{g}^{i\ov{i}} |\partial_i (\varphi_{V_{\alpha} V_1})|^2}{\lambda_1(\lambda_1-\lambda_{\alpha})} + \frac{\tilde{g}^{p\ov{p}} \tilde{g}^{q\ov{q}} | V_1(\tilde{g}_{p\ov{q}})|^2}{\lambda_1} - \frac{\tilde{g}^{i\ov{i}} | \partial_i (\varphi_{V_1 V_1})|^2}{\lambda_1^2} \\
{} & + h' \sum_k \tilde{g}^{i\ov{i}} (| \varphi_{ik}|^2 + |\varphi_{i\ov{k}}|^2) + h'' \tilde{g}^{i\ov{i}} |\partial_i | \partial \varphi|^2_g|^2 \\ {} &+ (A-C) \sum_i \tilde{g}^{i\ov{i}}
 - An.
\end{split}
\end{equation}
where
$$\vp_{\alpha\beta}=\nabla^2_{\alpha\beta}\vp,\quad \varphi_{V_{\alpha} V_{\beta}} =  \varphi_{\gamma \delta} V_{\ \, \alpha}^{\gamma} V_{\ \, \beta}^{\delta} =\nabla^2\vp(V_\alpha,V_\beta).$$

\end{lemma}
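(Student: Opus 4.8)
The plan is to compute $\Delta_{\tilde{g}}\hat Q$ directly at $x_0$, isolate the five terms appearing in \eqref{LhatQ4}, and show that everything left over is controlled by the a priori quantities in Theorem \ref{secondord}. Since $\hat Q$ is smooth near $x_0$ and attains its maximum there, $0\ge\Delta_{\tilde{g}}\hat Q(x_0)$, and at $x_0$
$$
\Delta_{\tilde{g}}\hat Q=\frac{\Delta_{\tilde{g}}\lambda_1(\Phi)}{\lambda_1}-\frac{\tilde{g}^{i\ov{i}}|\partial_i\lambda_1(\Phi)|^2}{\lambda_1^2}+h'\,\Delta_{\tilde{g}}|\partial\varphi|^2_g+h''\,\tilde{g}^{i\ov{i}}\bigl|\partial_i|\partial\varphi|^2_g\bigr|^2-A\,\Delta_{\tilde{g}}\varphi .
$$
The last term is exact: $-A\Delta_{\tilde{g}}\varphi=-A\tilde{g}^{i\ov{j}}(\tilde{g}_{i\ov{j}}-g_{i\ov{j}})=A\sum_i\tilde{g}^{i\ov{i}}-An$. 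For the remaining terms I will use two absorption rules, both justified by \eqref{ag}: any quantity bounded in absolute value by a uniform $C$ is $\ge-C\ge-\tfrac{C}{c}\sum_i\tilde{g}^{i\ov{i}}$; and any quantity bounded in absolute value by $C\lambda_1\sum_i\tilde{g}^{i\ov{i}}$ becomes, after dividing by $\lambda_1$, $\ge-C\sum_i\tilde{g}^{i\ov{i}}$ (recall $\lambda_1\gg1$). All such errors get absorbed into the $(A-C)\sum_i\tilde{g}^{i\ov{i}}$ of \eqref{LhatQ4}.

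For the gradient term I would expand in the chosen holomorphic normal coordinates, where $\partial g=0$ and $\partial_i\partial_{\ov{i}}g^{k\ov{l}}$ is bounded by the curvature at $x_0$, obtaining
$$
\Delta_{\tilde{g}}|\partial\varphi|^2_g=\sum_k\tilde{g}^{i\ov{i}}\bigl(|\varphi_{ik}|^2+|\varphi_{i\ov{k}}|^2\bigr)+2\,\mathrm{Re}\bigl(\tilde{g}^{i\ov{i}}\varphi_{i\ov{i}k}\,\varphi_{\ov{k}}\bigr)+O\Bigl(\textstyle\sum_i\tilde{g}^{i\ov{i}}\Bigr).
$$
Differentiating $\log\det\tilde{g}=F+\log\det g$ once in the $z^k$ direction and using $\partial g=0$ gives $\tilde{g}^{i\ov{i}}\varphi_{i\ov{i}k}=F_k$ at $x_0$, so the middle term equals $2\,\mathrm{Re}\langle\nabla F,\nabla\varphi\rangle_g$, which is bounded by the assumed bounds on $|\nabla F|_g$ and $\|\varphi\|_{C^1}$. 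Since $0<h'\le\tfrac12$ by \eqref{proph}, this accounts for the $h'$ term of \eqref{LhatQ4}, with the $h''$ term kept unchanged, up to an absorbable error.

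The heart of the matter is the eigenvalue term. Because $B$ has constant components in the $x$-coordinates and $\partial g=0$ at $x_0$, one has $\partial_i\Phi^{\alpha}_{\ \,\beta}=\partial_i\nabla^2_{\alpha\beta}\varphi$ at $x_0$; feeding this into the first- and second-variation formulas for the eigenvalue $\lambda_1(\Phi)$ --- which is smooth near $x_0$ precisely because $B$ makes its top eigenspace one-dimensional --- gives, at $x_0$,
$$
\partial_i\lambda_1(\Phi)=\partial_i(\varphi_{V_1V_1}),\qquad
\tilde{g}^{i\ov{i}}\partial_i\partial_{\ov{i}}\lambda_1(\Phi)=\Delta_{\tilde{g}}(\varphi_{V_1V_1})+2\sum_{\alpha>1}\frac{\tilde{g}^{i\ov{i}}|\partial_i(\varphi_{V_\alpha V_1})|^2}{\lambda_1-\lambda_\alpha}+O\Bigl(\lambda_1\textstyle\sum_i\tilde{g}^{i\ov{i}}\Bigr),
$$
where the gaps $\lambda_1-\lambda_\alpha=\lambda_1(\nabla^2\varphi)-\lambda_\alpha(\nabla^2\varphi)+1\ge1$ are uniformly positive. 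The first identity turns $-\tilde{g}^{i\ov{i}}|\partial_i\lambda_1(\Phi)|^2/\lambda_1^2$ into the third (bad) term of \eqref{LhatQ4}, and dividing the second identity by $\lambda_1$ produces the first (good) sum of \eqref{LhatQ4}; it remains to bound $\Delta_{\tilde{g}}(\varphi_{V_1V_1})$ from below. Since the Christoffel symbols vanish at $x_0$, $\varphi_{V_1V_1}$ agrees there with $V_1(V_1(\varphi))$, where $V_1$ acts as the constant-coefficient directional derivative $V_1^\alpha\partial_\alpha$; applying $\Delta_{\tilde{g}}$, the difference is $\ge-C\lambda_1\sum_i\tilde{g}^{i\ov{i}}$ (the one commutator error not manifestly $O(1)$ is $\tilde{g}^{i\ov{i}}(\partial_i\Gamma)(\partial_{\ov{i}}\nabla\varphi)$, which is $O(\lambda_1\sum_i\tilde{g}^{i\ov{i}})$ since $|\nabla^2\varphi|_g\le C\lambda_1$ by \eqref{stupid}). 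Finally, since $V_1^\alpha\partial_\alpha$ commutes with $\partial_i$ and $\partial_{\ov{j}}$, differentiating $\log\det\tilde{g}=F+\log\det g$ twice along $V_1$, using that $\tilde{g}$ is diagonal at $x_0$ and that $\overline{V_1(\tilde{g}_{p\ov{q}})}=V_1(\tilde{g}_{q\ov{p}})$, gives
$$
\Delta_{\tilde{g}}\bigl(V_1(V_1(\varphi))\bigr)=\sum_{p,q}\tilde{g}^{p\ov{p}}\tilde{g}^{q\ov{q}}|V_1(\tilde{g}_{p\ov{q}})|^2+V_1(V_1(F))+O\Bigl(\textstyle\sum_i\tilde{g}^{i\ov{i}}\Bigr),
$$
and $V_1(V_1(F))=\nabla^2F(V_1,V_1)\ge-C$ at $x_0$ by the assumed lower bound on $\nabla^2F$ and $|V_1|_g=1$. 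Dividing by $\lambda_1$ and assembling all of the above yields \eqref{LhatQ4}.

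I expect the main obstacle to be precisely this bookkeeping: one must check that every curvature term, commutator, and lower-order term arising along the way is bounded either by a uniform constant or by $C\lambda_1\sum_i\tilde{g}^{i\ov{i}}$ --- which forces genuine use of the hypotheses $\sup_M\Delta_g\varphi<\infty$ (equivalently \eqref{ag}), of the bounds on $\|\varphi\|_{C^1}$ and $|\nabla F|_g$, and of the lower bound on $\nabla^2F$ --- so that all of them collapse into the $(A-C)\sum_i\tilde{g}^{i\ov{i}}-An$ of \eqref{LhatQ4}, while the single decisive positive contribution $\lambda_1^{-1}\sum_{p,q}\tilde{g}^{p\ov{p}}\tilde{g}^{q\ov{q}}|V_1(\tilde{g}_{p\ov{q}})|^2$ survives with the correct sign. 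Crucially, $\inf_M F$ never enters, so the estimate is independent of it.
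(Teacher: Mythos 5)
Your proposal follows essentially the same route as the paper's own proof: expand $\Delta_{\tilde g}\hat Q$, use the first- and second-variation formulas for $\lambda_1(\Phi)$ (made smooth by the perturbation $B$) to produce the good gradient-squared sum over $\alpha>1$ and identify $\partial_i\lambda_1(\Phi)=\partial_i(\varphi_{V_1V_1})$ at $x_0$, replace $\tilde{g}^{i\ov{i}}\partial_i\partial_{\ov{i}}(\varphi_{V_1V_1})$ by $\tilde{g}^{i\ov{i}}V_1V_1(\tilde g_{i\ov{i}})$ up to $O(\lambda_1\sum_i\tilde g^{i\ov{i}})$ errors, differentiate $\log\det\tilde g = F+\log\det g$ twice along $V_1$ to extract $\tilde{g}^{p\ov{p}}\tilde{g}^{q\ov{q}}|V_1(\tilde g_{p\ov{q}})|^2 + V_1V_1(F)$, and handle the $h'$, $h''$, and $-A\Delta_{\tilde g}\varphi$ terms exactly as in (2.16)--(2.17). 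The argument is correct, and the bookkeeping you flag (controlling commutator errors by $C\lambda_1\sum_i\tilde g^{i\ov{i}}$ via \eqref{stupid}, absorbing $O(1)$ terms via \eqref{ag}) is precisely what the paper does.
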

\begin{proof} Compute
\begin{equation} \label{lbqh1}
\begin{split} \Delta_{\ti{g}}\hat{Q}  = {} & \frac{\Delta_{\ti{g}}(\lambda_1)}{\lambda_1} - \frac{\tilde{g}^{i\ov{i}} | \partial_i (\lambda_1)|^2}{\lambda_1^2} + h'\Delta_{\ti{g}} (| \partial \varphi|^2_g)  + h'' \tilde{g}^{i\ov{i}} |\partial_i | \partial \varphi|^2_g|^2  - A \Delta_{\ti{g}}\varphi.
\end{split}
\end{equation}
We now prove a lower bound for $\Delta_{\tilde{g}} (\lambda_1)$.  Using  the
fact that $\tilde{g}$ is diagonal at $x_0$ and  the coordinates are normal for $g$,
\begin{equation} \label{Llambda1}
\begin{split}
\Delta_{\ti{g}} (\lambda_1) = {} & \tilde{g}^{i\ov{i}} \lambda_1^{\alpha \beta, \gamma \delta} \partial_i (\Phi^{\gamma}_{\ \, \delta}) \partial_{\ov{i}} (\Phi^{\alpha}_{\ \,  \beta}) + \tilde{g}^{i\ov{i}} \lambda_1^{\alpha \beta} \partial_i \partial_{\ov{i}} (\Phi^{\alpha}_{\ \, \beta}) \\
= {} & \tilde{g}^{i\ov{i}} \lambda_1^{\alpha \beta, \gamma \delta} \partial_i (\varphi_{\gamma \delta}) \partial_{\ov{i}} (\varphi_{\alpha \beta}) + \tilde{g}^{i\ov{i}} \lambda_1^{\alpha \beta} \partial_i \partial_{\ov{i}} (\varphi_{\alpha \beta}) + \tilde{g}^{i\ov{i}} \lambda_1^{\alpha \beta} \varphi_{ \gamma \beta} \partial_i \partial_{\ov{i}} (g^{\alpha \gamma}) \\
& -  \tilde{g}^{i\ov{i}} \lambda_1^{\alpha \beta}B_{\gamma \beta} \partial_i \partial_{\ov{i}} (g^{\alpha \gamma})
 \\
\ge {} & 2 \sum_{\alpha >1} \tilde{g}^{i\ov{i}} \frac{ |\partial_i (\varphi_{V_{\alpha} V_1})|^2}{\lambda_1-\lambda_{\alpha}} + \tilde{g}^{i\ov{i}} \partial_i \partial_{\ov{i}} (\varphi_{V_1 V_1})
 - C \lambda_1 \sum_i \tilde{g}^{i\ov{i}}.
\end{split}
\end{equation}
Here we used the elementary formulas (see \cite[Lemma 5.2]{CTW}), holding at $x_0$,
\begin{equation} \label{formulae}
\begin{split}
\lambda_1^{\alpha \beta} := {} & \frac{\partial \lambda_1}{\partial \Phi^{\alpha}_{\ \, \beta}} =  V_{\ \, 1}^{\alpha} V_{\ \, 1}^{\beta} \\
\lambda_1^{\alpha \beta, \gamma \delta} := {} & \frac{\partial^2 \lambda_1}{\partial \Phi^{\alpha}_{\ \, \beta} \partial \Phi^{\gamma}_{\ \, \delta}} = \sum_{\mu >1}   \frac{V_{\ \, 1}^{\alpha} V_{\ \, \mu}^{\beta} V_{\ \, \mu}^{\gamma} V_{\ \, 1}^{\delta} + V_{\ \, \mu}^{\alpha} V_{\ \, 1}^{\beta} V_{\ \, 1}^{\gamma} V_{\ \, \mu}^{\delta}}{\lambda_1 - \lambda_{\mu}}. \end{split}
\end{equation}

Since the Christoffel symbols of the connection of $g$ vanish at $x_0$ and the components of $V_1$ are constant in our coordinate system, a short calculation shows that
\begin{equation} \label{blah0}
\begin{split}
\tilde{g}^{i\ov{i}} \partial_i \partial_{\ov{i}} (\varphi_{V_1 V_1})\ge {} &
 \tilde{g}^{i\ov{i}} V_1 V_1 (\partial_i \partial_{\ov{i}} \varphi)  - C \lambda_1 \sum_i \tilde{g}^{i\ov{i}} \\
\ge {}&   \tilde{g}^{i\ov{i}} V_1 V_1 (\tilde{g}_{i\ov{i}})  - C' \lambda_1 \sum_i \tilde{g}^{i\ov{i}},
\end{split}
\end{equation}
where we used that $|\de\vp|_g$ is uniformly bounded, and that we may assume without loss of generality that $\lambda_1(x_0)$ is large.
Applying $V_1V_1$ to the logarithm of  (\ref{ma2}),
\begin{equation} \label{logequation}
\log \det \tilde{g} = \log \det g + F,
\end{equation}
we obtain
\begin{equation} \label{blah1}
\begin{split}
\tilde{g}^{i\ov{i}} V_1 V_1 (\tilde{g}_{i\ov{i}}) = {} & \tilde{g}^{p\ov{p}} \tilde{g}^{q\ov{q}} | V_1(\tilde{g}_{p\ov{q}})|^2 + V_1V_1(\log\det g)+V_1 V_1(F).
\end{split}
\end{equation}
From this, (\ref{Llambda1}) and (\ref{blah0}) we have
\begin{equation} \label{l1lb}
\begin{split}
\Delta_{\ti{g}} (\lambda_1)  \ge {} & 2 \sum_{\alpha>1} \tilde{g}^{i\ov{i}} \frac{ |\partial _i (\varphi_{V_{\alpha} V_1})|^2}{\lambda_1 - \lambda_{\alpha}} + \tilde{g}^{p\ov{p}} \tilde{g}^{q\ov{q}} |V_1(\tilde{g}_{p\ov{q}})|^2 - C \lambda_1 \sum_i \tilde{g}^{i\ov{i}}.
\end{split}
\end{equation}
Next,  at $x_{0}$,
\begin{equation}\label{lemmapv}
\begin{split}
\Delta_{\ti{g}}(|\partial\varphi|_{g}^{2}) ={} &\sum_{k}\tilde{g}^{i\overline{i}}(|\varphi_{ik}|^{2}+| \varphi_{i\ov{k}}|^{2})
+2\textrm{Re}\left(\sum_{k}\varphi_{k}F_{\overline{k}}\right)\\
{} & +\tilde{g}^{i\overline{i}}\partial_{i}\partial_{\ov{i}}(g^{k\overline{\ell}})\varphi_{k}\varphi_{\overline{\ell}}\\
\geq {} & \sum_{k}\tilde{g}^{i\overline{i}}(|\varphi_{ik}|^{2}+|\varphi_{i\ov{k}}|^{2})- C\sum_{i}\tilde{g}^{i\overline{i}},
\end{split}
\end{equation}
where to obtain the first line we have applied $\partial_{\ov{k}}$ to  \eqref{logequation}.
Lastly, we observe that
\begin{equation}\label{triv}
\Delta_{\ti{g}}\vp=n-\sum_i\ti{g}^{i\ov{i}}.
\end{equation}
The result then follows by combining (\ref{lbqh1}), the first equation of (\ref{formulae}), (\ref{l1lb}), \eqref{lemmapv} and (\ref{triv}).
\end{proof}

We need to deal with the negative third term
\begin{equation} \label{badterm}
- \frac{\tilde{g}^{i\ov{i}} | \partial_i (\varphi_{V_1 V_1})|^2}{\lambda_1^2}
\end{equation}
on the right hand side of (\ref{LhatQ4}).

Define  a local $(1,0)$ vector field by $$W_1:=\frac{1}{\sqrt{2}}(V_1-\mn JV_1),$$
where $J$ is the complex structure.
We write at $x_0$,
\begin{equation} \label{fix}
W_1=\sum_{q=1}^n \nu_q \partial_q,\quad \sum_{q=1}^n|\nu_q|^2=1,
\end{equation}
for complex numbers $\nu_1, \ldots, \nu_n$, where the second equation follows from the fact that $W_1$ is $g$-unit at $x_0$.

Next, define $\mu_2, \mu_3, \ldots, \mu_{2n} \in \mathbb{R}$ by
\begin{equation} \label{defnmu}
JV_1=\sum_{\alpha>1} \mu_\alpha V_\alpha,\quad \sum_{\alpha>1}\mu_\alpha^2=1, \quad \textrm{at } x_0,
\end{equation}
noting that at $x_0$ the vector $JV_1$ is $g$-unit and $g$-orthogonal to $V_1$.

Since we are using complex coordinates, the complex structure $J$ in our real coordinates $(x^1,\dots,x^{2n})$ has constant coefficients, and so do the vector fields $JV_1$ and $W_1$.

Then we have:

\begin{lemma} \label{lemmauno} There is a uniform constant $C\geq 1$ such that if $0 < \ve< 1/2$ and $\lambda_1(x_0) \ge C/\ve^2,$ then at $x_0$ we have
\begin{equation}
\begin{split}
\sum_i \frac{\tilde{g}^{i\ov{i}} |\partial_i (\varphi_{V_1V_1})|^2}{\lambda_1^2} \le {} & 2(h')^2 \tilde{g}^{i\ov{i}} |\partial_i |\partial \varphi|_g^2|^2 + 4 \ve A^2 \tilde{g}^{i\ov{i}}|\varphi_i|^2 \\
{} & +2\sum_{\alpha>1} \frac{\tilde{g}^{i\ov{i}} | \partial_i (\varphi_{V_{\alpha} V_1})|^2}{\lambda_1(\lambda_1-\lambda_{\alpha})} + \frac{\tilde{g}^{p\ov{p}} \tilde{g}^{q\ov{q}} |V_1(\tilde{g}_{p\ov{q}})|^2}{\lambda_1} + \sum_i \tilde{g}^{i\ov{i}}.
\end{split}
\end{equation}
\end{lemma}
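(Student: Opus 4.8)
The plan is to estimate the bad term $\tilde g^{i\ov i}|\partial_i(\varphi_{V_1V_1})|^2/\lambda_1^2$ by relating the derivative of $\varphi_{V_1V_1}$ to the derivative of $\lambda_1$ (which vanishes to first order after we use the critical point equation), and then to the quantities appearing in $\hat Q$. Concretely, at $x_0$ we have $\partial_i\hat Q=0$, so
\[
\frac{\partial_i(\lambda_1)}{\lambda_1} = -h'\,\partial_i|\partial\varphi|_g^2 + A\varphi_i.
\]
Since $V_1$ is a unit eigenvector and the $V_\alpha$ are constant, $\partial_i\lambda_1 = \partial_i(\Phi^\alpha_{\ \beta})V^\alpha_1 V^\beta_1 = \partial_i(\varphi_{V_1V_1}) + (\text{terms from }g^{\alpha\gamma}\text{ and }B\text{, which vanish since }g\text{ is normal at }x_0\text{ and }B_{\gamma\beta}V^\gamma_1V^\beta_1\equiv 0)$. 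Wait — more carefully, $B_{\gamma\beta}V^\gamma_1 V^\beta_1 = \sum(\delta_{\gamma\beta}-V^\gamma_1V^\beta_1)V^\gamma_1V^\beta_1 = 1-1 = 0$ identically in a neighborhood, so $\partial_i$ of it also vanishes; hence $\partial_i(\varphi_{V_1V_1}) = \partial_i(\lambda_1) + (\text{lower order involving }\partial_ig^{\alpha\gamma}\text{, which is }O(|z|)\text{, i.e. }0\text{ at }x_0)$. Thus at $x_0$, $\partial_i(\varphi_{V_1V_1}) = \partial_i\lambda_1 = \lambda_1(-h'\partial_i|\partial\varphi|_g^2 + A\varphi_i)$.

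Next I would substitute this identity and expand the square: $|\partial_i(\varphi_{V_1V_1})|^2/\lambda_1^2 = |{-h'\partial_i|\partial\varphi|_g^2 + A\varphi_i}|^2 \le (1+\delta)(h')^2|\partial_i|\partial\varphi|_g^2|^2 + (1+\delta^{-1})A^2|\varphi_i|^2$ for any $\delta>0$; but the target RHS has coefficient exactly $2(h')^2$ and $4\ve A^2$, so the naive split is not quite sharp enough and the real content must come from elsewhere — namely, one must replace $\varphi_{V_1V_1}$ by $2\,\mathrm{Re}(\varphi_{W_1\ov{W_1}})$ plus corrections, using that $\varphi_{V_1V_1}$ is a \emph{real} Hessian entry while $\tilde g_{p\ov q}$ and $V_1(\tilde g_{p\ov q})$ are the natural Kähler quantities. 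The decomposition $V_1 = \frac{1}{\sqrt2}(W_1 + \ov{W_1})$ gives $\varphi_{V_1V_1} = \varphi_{W_1W_1} + \varphi_{\ov{W_1}\,\ov{W_1}} + 2\varphi_{W_1\ov{W_1}}$ (after symmetrization), and $\partial_i(\varphi_{W_1\ov{W_1}}) = \partial_i(\tilde g_{W_1\ov{W_1}}) - \partial_i(g_{W_1\ov{W_1}})$; the first is controlled by $\frac{\tilde g^{p\ov p}\tilde g^{q\ov q}|V_1(\tilde g_{p\ov q})|^2}{\lambda_1}$-type terms via Cauchy–Schwarz (here one pays a factor $\lambda_1$, which is why $\lambda_1$ large is needed), and the second-order-in-$z$ corrections are absorbed into $\sum_i\tilde g^{i\ov i}$ using $\lambda_1\ge C/\ve^2$. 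The holomorphic pieces $\partial_i(\varphi_{W_1W_1})$ are handled by relating $\varphi_{W_1W_1}$ to off-diagonal second derivatives and feeding them into the $\sum_{\alpha>1}\frac{\tilde g^{i\ov i}|\partial_i(\varphi_{V_\alpha V_1})|^2}{\lambda_1(\lambda_1-\lambda_\alpha)}$ terms, using the expansions \eqref{fix} and \eqref{defnmu} of $W_1$ and $JV_1$ in the eigenbasis; this is precisely where having $\lambda_1(\Phi)$ with a simple top eigenvalue ($\lambda_1 > \lambda_\alpha$) matters so that $\lambda_1-\lambda_\alpha$ makes sense, though one must be careful that $\lambda_1-\lambda_\alpha$ could be small — but note the claimed inequality only has $\lambda_1(\lambda_1-\lambda_\alpha)$ in denominators on the RHS matching the good term in \eqref{LhatQ4}, so no loss occurs there.

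The main obstacle will be the bookkeeping in splitting $\partial_i(\varphi_{V_1V_1})$ into the "diagonal Kähler part'' $2\,\mathrm{Re}\,\partial_i(\varphi_{W_1\ov{W_1}})$, the "holomorphic part'' $2\,\mathrm{Re}\,\partial_i(\varphi_{W_1W_1})$, and the connection/coordinate error terms, and then matching each piece against the five groups on the RHS with exactly the stated constants $2(h')^2$, $4\ve A^2$, $2$, $1$, $1$. The coefficient $2$ in front of $(h')^2$ (rather than $1+\delta$) suggests that after using the critical-point identity one gets $(h')^2|{\cdots}|^2$ with coefficient $1$ from the $|\partial\varphi|_g^2$ term directly, plus another $(h')^2|{\cdots}|^2$ worth coming from re-expressing $\varphi_{W_1\ov{W_1}}$ and its relation to $|\partial\varphi|_g^2$ — I would track this by writing everything in terms of $\partial_i\lambda_1/\lambda_1$ first, using $|a+b|^2 \le 2|a|^2+2|b|^2$ once, and then re-expanding $a = -h'\partial_i|\partial\varphi|_g^2+A\varphi_i$ with $|{-h'\partial_i|\partial\varphi|_g^2+A\varphi_i}|^2 \le (1+\ve)( \cdots )$ type estimates, choosing the free parameters to land on $4\ve A^2$. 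The role of $\ve$ is to make the $A^2$-term small (it will later be absorbed by the good $(A-C)\sum\tilde g^{i\ov i}$ term, or rather by the $h'\sum_k\tilde g^{i\ov i}|\varphi_{ik}|^2$ term, since $|\varphi_i|^2$ can be bounded using $\varphi\in C^1$); thus the hypothesis $\lambda_1(x_0)\ge C/\ve^2$ is exactly what is needed to make the second-order coordinate errors (which scale like $|z|^2\lambda_1^2$ near $x_0$, hence like $\lambda_1^2 \cdot \ve^2/(C\lambda_1) $ after the division and Cauchy–Schwarz) fit into the single clean term $\sum_i\tilde g^{i\ov i}$.
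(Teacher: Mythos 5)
Your opening observation is correct and is exactly how the paper gets the $2(h')^2$ and $4\ve A^2$ terms: at $x_0$ one has $\partial_i(\varphi_{V_1V_1})=\partial_i\lambda_1$, and $d\hat Q=0$ gives $\partial_i\lambda_1/\lambda_1=A\varphi_i-h'\partial_i|\partial\varphi|^2_g$. However, the paper does \emph{not} apply this identity to the whole sum; it applies it only to a $2\ve$-fraction of the sum, producing $2\ve\cdot\bigl(2(h')^2|\cdots|^2+2A^2|\varphi_i|^2\bigr)\le 2(h')^2|\cdots|^2+4\ve A^2|\varphi_i|^2$. The remaining $(1-2\ve)$-fraction is handled by the completely different decomposition. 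You notice the naive split fails but never explicitly identify this $(1-2\ve)+2\ve$ partition, which is the only way the stated coefficients arise.

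The more serious gap is in your choice of decomposition. You use $V_1=\tfrac{1}{\sqrt2}(W_1+\ov{W_1})$, giving pieces $\varphi_{W_1W_1}$, $\varphi_{\ov{W_1}\ov{W_1}}$, and $\varphi_{W_1\ov{W_1}}$ (incidentally, the correct identity is $\varphi_{V_1V_1}=\tfrac12\varphi_{W_1W_1}+\varphi_{W_1\ov W_1}+\tfrac12\varphi_{\ov{W_1}\ov{W_1}}$, not your version, but that is a minor slip). The problem is that these pieces do not match the two good terms from Lemma \ref{lemmalbl1}. Specifically, $\partial_i(\varphi_{W_1\ov{W_1}})=\sum_{p,q}\nu_p\ov{\nu_q}\,\partial_i(\tilde g_{p\ov q})+E$, and after symmetrizing third derivatives this becomes $W_1\bigl(\sum_q\ov{\nu_q}\tilde g_{i\ov q}\bigr)+E$; since $W_1=\tfrac{1}{\sqrt2}(V_1-\sqrt{-1}JV_1)$ this produces a $JV_1(\tilde g_{i\ov q})$-term, whereas the good term from \eqref{LhatQ4} only controls $V_1(\tilde g_{p\ov q})$. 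Likewise $\varphi_{W_1W_1}$, when expanded, contains $\varphi_{JV_1JV_1}=\sum_{\alpha,\beta>1}\mu_\alpha\mu_\beta\varphi_{V_\alpha V_\beta}$, whose $i$-derivative involves cross terms $\partial_i(\varphi_{V_\alpha V_\beta})$ with both $\alpha,\beta>1$, and these are also not controlled by the first good term $\sum_{\alpha>1}|\partial_i(\varphi_{V_\alpha V_1})|^2/(\lambda_1(\lambda_1-\lambda_\alpha))$. The paper instead writes $V_1=\sqrt2\,\ov{W_1}-\sqrt{-1}\,JV_1$ and hence $\partial_i(\varphi_{V_1V_1})=\sqrt2\,\partial_i(\varphi_{V_1\ov{W_1}})-\sqrt{-1}\,\partial_i(\varphi_{V_1JV_1})+E$; crucially both pieces still carry $V_1$ as one leg, so the first gives exactly $\sum_q\ov{\nu_q}V_1(\tilde g_{i\ov q})$ and the second gives exactly $\sum_{\alpha>1}\mu_\alpha\partial_i(\varphi_{V_1V_\alpha})$, matching the two good terms on the nose. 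Without this particular algebraic identity, the weighted Cauchy--Schwarz step (estimating $|\sum_{\alpha>1}\mu_\alpha\partial_i(\varphi_{V_1V_\alpha})|^2$ by $(\lambda_1-\sum_{\alpha>1}\lambda_\alpha\mu_\alpha^2)\sum_{\alpha>1}|\partial_i(\varphi_{V_1V_\alpha})|^2/(\lambda_1-\lambda_\alpha)$, then bounding the prefactor by $\approx 2\lambda_1$ using $\tilde g(W_1,\ov W_1)>0$) has nothing to bite on, and the argument does not close.

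Finally, your explanation of the role of $\lambda_1\ge C/\ve^2$ is off: the errors $E$ are commutator terms bounded by $C$ (involving $|\partial\varphi|_g$), not quantities scaling like $|z|^2\lambda_1^2$; the hypothesis is used to absorb the Cauchy--Schwarz penalty factors of size $C/(\ve\lambda_1^2)$ and $C/(\ve\lambda_1)$ into the single term $\sum_i\tilde g^{i\ov i}$.
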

\begin{proof}
Writing $E$ for an ``error'' term which is uniformly bounded $|E| \le C,$ we use the definition of $W_1$, the $\mu_{\alpha}$ and the $\nu_q$ to obtain at $x_0$,
\[
\begin{split}
\partial_i (\varphi_{V_1 V_1}) = {} & \sqrt{2} \partial_i \varphi_{V_1  \ov{W_1}} - \sqrt{-1} \partial_i (\varphi_{V_1 J V_1}) \\
= {} & \sqrt{2} \ov{W_1} \partial_i (V_1 (\varphi)) - \sqrt{-1} JV_1 \partial_i (V_1(\varphi)) + E \\
= {} & \sqrt{2} \sum_q \ov{\nu_q} V_1 (\tilde{g}_{i\ov{q}})  - \sqrt{-1} \sum_{\alpha>1} \mu_{\alpha} \partial_i(\varphi_{V_1 V_{\alpha}}) + E,
\end{split}
\]
where we used that $V_1, JV_1$ and $W_1$ have constant coefficients, and that the coordinates are normal at $x_0$, which imply that $(\nabla_{V_1}\ov{W_1})(x_0)=0$ and so on, which shows that all the commutations of $3$ derivatives above give error terms which only involve $\de\vp$, hence are uniformly bounded.
Therefore
\begin{equation} \label{tech15}
\begin{split}
\lefteqn{(1-2\ve) \sum_{i} \frac{\tilde{g}^{i\ov{i}} | \partial_i (\varphi_{V_1V_1})|^2}{\lambda_1^2} } \\ = {} & (1-2\ve) \sum_{i} \frac{\tilde{g}^{i\ov{i}} | \sqrt{2} \sum_{q} \ov{\nu_q} V_1 (\tilde{g}_{i\ov{q}}) - \sqrt{-1} \sum_{\alpha>1} \mu_{\alpha}\partial_i(\varphi_{V_1 V_{\alpha}}) +E |^2}{\lambda_1^2} \\
\le {} &  (1-\ve) \sum_{i} \frac{\tilde{g}^{i\ov{i}} | \sqrt{2} \sum_{q} \ov{\nu_q} V_1 (\tilde{g}_{i\ov{q}}) - \sqrt{-1} \sum_{\alpha>1} \mu_{\alpha}\partial_i(\varphi_{V_1 V_{\alpha}})  |^2}{\lambda_1^2} + \frac{C}{\ve\lambda_1^2} \sum_i \tilde{g}^{i\ov{i}} \\
  \leq {} & (1-\ve)\left(1+\frac{1}{\ve}\right)\sum_{i}\frac{2\tilde{g}^{i\ov{i}}}{\lambda_1^2} \left|\sum_{q}\ov{\nu_q}V_1(\ti{g}_{i\ov{q}})\right|^2+(1-\ve^2)\sum_{i}\frac{\tilde{g}^{i\ov{i}}}{\lambda_1^2} \left|\sum_{\alpha>1}\mu_\alpha \partial_i(\vp_{V_1V_\alpha})\right|^2 \\
   {} & + \sum_i \tilde{g}^{i\ov{i}},
\end{split}
\end{equation}
using that $\lambda_1\geq C/\ve^2\geq \sqrt{C/\ve}$. Using this together with \eqref{ag} and the fact that $|\nu_q| \le 1$, and $\lambda_1\geq C/\ve$,
\begin{equation} \label{tech1}
\begin{split}
(1-\ve)\left(1+\frac{1}{\ve}\right)\sum_{i}\frac{2\tilde{g}^{i\ov{i}}}{\lambda_1^2} \left|\sum_{q}\ov{\nu_q}V_1(\ti{g}_{i\ov{q}})\right|^2 \le {} & \frac{C}{\ve \lambda_1} \sum_{i} \sum_{q} \frac{\tilde{g}^{i\ov{i}} \tilde{g}^{q\ov{q}}  |V_1 (\tilde{g}_{i\ov{q}})|^2}{\lambda_1} \\
\le {} &   \sum_{i} \sum_{q} \frac{\tilde{g}^{i\ov{i}} \tilde{g}^{q\ov{q}} |V_1 (\tilde{g}_{i\ov{q}})|^2}{\lambda_1}.
\end{split}
\end{equation}
Next, since $\sum_{\alpha>1}\mu_\alpha^2=1$,
\begin{equation} \label{tech2}
\begin{split}\left|\sum_{\alpha>1}\mu_\alpha \partial_i(\vp_{V_\alpha V_1})\right|^2&\leq\left(\sum_{\alpha>1}(\lambda_1-\lambda_\alpha)\mu_\alpha^2\right)\left(\sum_{\alpha>1}\frac{|\partial_i(\vp_{V_\alpha V_1})|^2}{\lambda_1-\lambda_\alpha}\right)\\
&=\left(\lambda_1-\sum_{\alpha>1}\lambda_\alpha\mu_\alpha^2\right)\left(\sum_{\alpha>1}\frac{|\partial_i(\vp_{V_\alpha V_1})|^2}{\lambda_1-\lambda_\alpha}\right).
\end{split}
\end{equation}
But by the definition of $W_1$, the $\lambda_{\alpha}$ and $\mu_{\alpha}$,
$$0<\ti{g}(W_1,\ov{W_1})
= 1+\frac{1}{2}(\vp_{V_1V_1}+\vp_{JV_1JV_1})
=1+ \frac{1}{2}\left(\lambda_1+\sum_{\alpha>1}\lambda_\alpha\mu_\alpha^2\right),$$
 so
$$\lambda_1-\sum_{\alpha>1}\lambda_\alpha\mu_\alpha^2\leq 2\lambda_1+2\leq (2+2\varepsilon^2)\lambda_1,$$
as long as $\lambda_1 \ge 1/\ve^2$.
Hence
\begin{equation} \label{tech25}
\begin{split}
(1-\ve^2)\left(\lambda_1-\sum_{\alpha>1}\lambda_\alpha\mu_\alpha^2\right)&\leq
2(1-\ve^2)(1+\ve^2)\lambda_1\leq 2\lambda_1,
\end{split}
\end{equation}

Finally, using $d\hat{Q}=0$ at $x_0$,
\begin{equation} \label{tech3}
\begin{split}
2\ve \sum_{i} \frac{\tilde{g}^{i\ov{i}} | \partial_i (\varphi_{V_1V_1})|^2}{\lambda_1^2}&=2\ve\sum_i \tilde{g}^{i\ov{i}}|A\vp_i+h'\de_i|\partial\varphi|^2_g|^2\\
 &\le  4\ve A^2  \sum_{i} \tilde{g}^{i\ov{i}} | \varphi_i|^2 + 4 \ve (h')^2 \sum_{i} \tilde{g}^{i\ov{i}} |\partial_i |\partial\varphi|^2_g|^2.
\end{split}
\end{equation}
Combining  (\ref{tech15}), (\ref{tech1}), (\ref{tech2}), (\ref{tech25}) and (\ref{tech3}) completes the proof of the lemma.
\end{proof}

We now complete the proof of Theorem \ref{secondord}.  Combining \eqref{LhatQ4} with Lemma  \ref{lemmauno} we have
\[
\begin{split}
0 \ge &  - 4\ve A^2  \tilde{g}^{i\ov{i}}|\varphi_i|^2  - 2 (h')^2 \sum_{i}\tilde{g}^{i\ov{i}} |\partial_i | \partial \varphi|^2_g|^2 \\ {} &
 + h' \sum_k \tilde{g}^{i\ov{i}} (| \varphi_{ik}|^2 + |\varphi_{i\ov{k}}|^2)   +h'' \tilde{g}^{i\ov{i}} |\partial_i | \partial \varphi|^2_g|^2  \\ {} & + (A -C_0) \sum_i \tilde{g}^{i\ov{i}} - An,\\
\end{split}
\]
as long as $\ve<\frac{1}{2}$ and $\lambda_1(x_0)\geq C/\ve^2$.  Recalling that $h''=2(h')^2$, and
choosing $A=C_0+2$ and $$\ve=\frac{1}{4A^2 (\sup_M | \partial \varphi |_g^2+1)},$$
we may now assume without loss of generality that $\lambda_1(x_0)\geq C/\ve^2$, and we conclude that at $x_0$ we have
$$\sum_{i}\tilde{g}^{i\ov{i}}+h' \sum_k \tilde{g}^{i\ov{i}} (| \varphi_{ik}|^2 + |\varphi_{i\ov{k}}|^2)\leq C,$$
which implies that $\lambda_1(x_0)\leq C$, completing the proof of Theorem \ref{secondord}.
\end{proof}

\section{Proof of Theorem \ref{geodesic}} \label{sectionthm1}

With the notation of the introduction,  $\Phi$ is the limit of $\Phi_\ve-|z|^2$ as $\ve\to 0$ (in $C^{1,\alpha}(X\times\Sigma)$, for any $0<\alpha<1$), where for $\ve>0$ the smooth functions $\Phi_\ve$ solve the ``$\ve$-geodesic'' equation
\begin{equation}\label{maeps}
(\pi^*\omega+\mn dz\wedge d\ov{z}+\ddbar\Phi_\ve)^{m+1}=\ve \pi^*\omega^m\wedge \mn dz\wedge d\ov{z},
\end{equation}
with the same boundary conditions as $\Phi$.
Chen \cite{Ch} (see also \cite{B2,Bo,CKNS,Gu}) has proved uniform bounds, independent of $\ve$, on $\sup_{X\times\Sigma}|\Phi_\ve|, \sup_{X\times\Sigma}|\nabla\Phi_\ve|,$ $\sup_{X\times\Sigma}\Delta\Phi_\ve$ and $\sup_{\de(X\times\Sigma)}|\nabla^2\Phi_\ve|$, where we are using the reference K\"ahler metric
 $\pi^*\omega+\mn dz\wedge d\ov{z}$ on $X\times\Sigma.$ Theorem \ref{secondord} thus applies to \eqref{maeps} and we obtain
$$\sup_{X\times\Sigma}|\nabla^2\Phi_\ve|\leq C,$$
where $C$ does not depend on $\ve$. Letting $\ve\to 0$ we conclude that $|\nabla^2\Phi|$ is in $L^\infty(X\times\Sigma)$, and so $\Phi$ is in $C^{1,1}(X\times\Sigma)$.

\end{document}